\documentclass[a4paper, twoside,12pt]{article}
 \usepackage{fancyhdr}
 \usepackage[english]{babel}        
\usepackage[T1]{fontenc}          
\usepackage{graphicx}             
\usepackage{framed}
\usepackage[margin=1in]{geometry}
\usepackage{graphicx}
\usepackage{amsmath}
\usepackage{amsfonts}   
\usepackage{amssymb}    
\usepackage{amsthm}
 \usepackage[english]{babel}        
\usepackage[T1]{fontenc}
\usepackage{dsfont}       
\usepackage{graphicx}             
\usepackage[margin=1in]{geometry}
\usepackage{graphicx}
\usepackage{amsfonts}   
\usepackage{amsthm}
\usepackage{amsmath,amssymb,color,inputenc,euscript,graphicx,psfrag}


\setlength{\topmargin}{-0.3in}
\setlength{\topskip}{0.3in}
\setlength{\textheight}{9.5in}
\setlength{\textwidth}{6in}
\setlength{\oddsidemargin}{0.1in}
\setlength{\evensidemargin}{0.1in}


\newtheorem{thm}{Theorem}[section]

\newtheorem{lem}[thm]{Lemma}

\numberwithin{equation}{section}

\renewcommand{\thefootnote}



\author {B\'echir Amri }
\title{  About integral product formula for Jack polynomials of two variables. }
\date{}
\begin{document}

\maketitle
\begin{abstract}
 We present an integral product formula for Jack polynomials of two variables, extending that of zonal polynomials. It provides another way to find  the  explicit integral representation for the generalized  Bessel function of type $ B_2 $, as presented in \cite{D}.
     \end{abstract}
\section{Introduction}
 Jack polynomials $J_\lambda^\alpha$ are family of homogenous  symmetric polynomials indexed by   partitions $\lambda$ and depend on a parameter $\alpha$.
     They were first  introduced by the statistician H. Jack in  \cite{J} as generalisation   of zonal spherical functions on some symmetric spaces.
Zonal symmetric polynomials are the case $\alpha=2$ which can be  constructed in term of eigenvalues of a symmetric matrix \cite{JA} and interpreted as the zonal spherical functions of the Gelfand pair   $(GL(n), O(n))$ \cite{MA}.   Motivated by special values of  the parameters $ \alpha =1, 1/2, 2$   many properties of the zonal spherical functions  can be extended to Jack polynomials \cite{ST}.
 This      work  present an extension of the product formula, one  of the interesting  property   for   zonal polynomials,
to  Jack polynomials of second order.  This is carried out by  considering  the general   partial differential equation for Jack polynomials  where   two eigenvalues are considered. Basic references for the theory of Jack polynomials are here \cite{ST, MA, OO}.
\par Let  $\lambda = ( \lambda_1, \lambda_2,...,\lambda_n) \in \mathbb{N}^n, \lambda_1 \geq  \lambda_2...\geq \lambda_n$  be a partition of
$$|\lambda|=  \sum_{i=1}^n\lambda_i.$$
   The monomial symmetric functions  $ m_\lambda(x)$ are defined by
   $$m_\lambda(x)=\sum_{\sigma\in S_n} x_{\sigma(1)}^{\lambda_1}x_{\sigma(2)}^{\lambda_2}...x_{\sigma(n)}^{\lambda_n}$$
   For two   partitions  $\lambda$ and  $\mu$
we denote $ \lambda> \mu$ if $|\lambda|= |\mu|$  and  for all $i \geq  1$,  $ \lambda_1+\lambda_2+...+\lambda_i \geq  \mu_1+\mu_2+...+\mu_i$. For a given partition $\lambda$, the Jack polynomials $P_\lambda^k$ attached with parameter $k>0$ can be defined as the unique   symmetric eigenfunction of the operator
$$\sum_{i=1}^nx_i^2\frac{\partial^2}{\partial^2x_i}+2k\sum_{i\neq j}\frac{x_i^2}{x_i-x_j}\frac{\partial}{\partial x_i}$$
which has an expansion of the form
$$ m_\lambda+\sum_{\mu<\lambda}a_{\mu\lambda}m_{\mu},$$
with the eigenvalue
\begin{equation}\label{e}
  e_\lambda= \sum_{i=1}^n\lambda_i(\lambda_i+2k (n-i)-1).
\end{equation}
 Here the  parameter  $k$  is related to Jack's  $\alpha$ by $k=1/ \alpha$.
There is an  integral representation  of Jack polynomials \cite{OO}  giving  a
way to obtain the polynomial in n variables from the polynomial of n-1 variables. It states that
for $\lambda$ a partition of at most $n-1$ parts
\begin{eqnarray}\label{rec}
 P_\lambda^k(x)=&& \prod_{i=1}^{n-1}\frac{\Gamma(\lambda_i+(n-i)k)}{\Gamma(\lambda_i+(n-i+1)k)\Gamma(k)}\prod_{1\leq i<j\leq n}(x_i-x_j)^{1-2k}
\\&&\times\int_{\nu\prec x} P_\lambda^k(\nu)\prod_{1\leq i<j\leq n-1}(\nu_i-\nu_j) \prod_{i=1}^{n-1} \prod_{j=1}^n|x_i-\nu_j|^{k-1}
\end{eqnarray}
where $\nu\prec x$ means that $x_1\leq \eta_1\leq x_2\leq....\leq \eta_{n-1}\leq x_n$.
 There is another  way of defining Jack polynomials  constructed by Gram-Schmidt  orthogonalization  relative to scalar   product on the ring of symmetric polynomials, see\cite{ST}.
  \par When the parameter $k=1/2$, Jack polynomials $P_\lambda^k$ can be connected to    zonal spherical  polynomials $Z_\lambda$  on the space $\Sigma_n^+$  of real symmetric positive definite matrix via
   $$ \frac{Z_\lambda(x)}{Z_\lambda(I)}=\frac{P_\lambda^k(x_1,x_2...,x_n)}{P_\lambda^k(1,1...,1)}$$
  for $x\in \Sigma_n^+$  with eigenvalues $x_1,x_2...,x_n$.
   From the theory of zonal spherical function
   $Z_\lambda$ satisfies the following product formula
   \begin{equation}\label{zonal}
     \frac{Z_\lambda(s) Z_\lambda(t)}{Z_\lambda(1)}=\int_{O(n)}Z_\lambda(sktk')dk=2\int_{SO(n)}Z_\lambda(s^{1/2}ktk's^{1/2})dk
 \end{equation}
where $dk$ is the Haar measure of  the orthogonal group $O(n)$.
\section{The case $n=2$}
 Our aim is  to find a generalization  of the product formula
  (\ref{zonal})  in terms of Jack polynomials $P_\lambda^k$. Let
$$s= \left(
       \begin{array}{cc}
         x_1 & 0\\
         0 & x_2 \\
       \end{array}
     \right),\qquad t= \left(
       \begin{array}{cc}
         y_1 & 0\\
         0 & y_2 \\
       \end{array}
     \right)
$$
 where   $x_1, x_2,y_2,y_2$ are nonnegative real numbers. Matrices of
  $SO(2)$  are given by

$$k=k_\theta= \left(
      \begin{array}{cc}
        \cos \theta & -\sin \theta \\
       \sin \theta & \cos \theta \\
      \end{array}
    \right),  \qquad \theta\in [0,2\pi]
$$
 and the Haar measure  $dk = (2\pi)^{-1} d\theta$.   It is straightforward to verify that
$$s^{1/2}ktk's^{1/2}=\left(
                       \begin{array}{cc}
                         x_1y_1\cos^2\theta+x_1y_2\sin^2\theta &  \sqrt{x_1x_2}(y_1-y_2)\cos \theta\sin \theta\\
                          \sqrt{x_1x_2}(y_1-y_2)\cos \theta\sin \theta  &   x_2y_2\cos^2\theta+x_2y_1\sin^2\theta \\
                       \end{array}
                     \right)
$$
and  the corresponding  eigenvalues are given by
\begin{eqnarray*}
  X_1(\theta)=   && \frac{1}{2}\Bigg\{(x_1y_1+x_2y_2)\cos^2\theta+(x_1y_2+x_2y_1)\sin^2\theta  +\\  &&
\sqrt{\Big((x_1y_1+x_2y_2)\cos^2\theta+(x_1y_2+x_2y_1)\sin^2\theta\Big)^2-4 x_1x_2 y_1y_2}\;\Bigg\}
\end{eqnarray*}
and
\begin{eqnarray*}
   X_2(\theta)= && \frac{1}{2}\Bigg\{(x_1y_1+x_2y_2)\cos^2\theta+(x_1y_2+x_2y_1)\sin^2\theta  -\\  &&
\sqrt{\Big((x_1y_1+x_2y_2)\cos^2\theta+(x_1y_2+x_2y_1)\sin^2\theta\Big)^2-4 x_1x_2 y_1y_2}\;\Bigg\}.
\end{eqnarray*}
 It will be convenient later  to set   $ \cos^2\theta = (u+1)/2$ and we put
\begin{eqnarray}\label{v1}
\alpha= \frac{1}{2}\Big( x_1+x_2)(y_1+y_2)  + (x_1-x_2)(y_1-y_2)u\Big) \nonumber
 \\ a=\frac{1}{2}\Big\{ y_1+y_2+ (y_1-y_2)u\Big),\qquad \overline{a}=\frac{1}{2}\Big\{ y_1+y_2 -(y_1-y_2)u\Big).
 \end{eqnarray}
    We have then
    \begin{eqnarray}\label{v2}
     X_1(u)=   \frac{1}{2}\Big( \alpha +
\sqrt{\alpha^2-4 x_1x_2 y_1y_2}\;\Big )\; \text{and}  \;  X_2 (u)=   \frac{1}{2}\Big( \alpha -
\sqrt{\alpha^2-4 x_1x_2 y_1y_2}\; \Big)
\end{eqnarray}
  The following basic result gives an  integral product formula for $P_\lambda^k$.
 \begin{thm}\label{th1} For a partition $\lambda$ and for  $x=(x_1,x_2) $,  $y=(y_1,y_2)$,
 \begin{equation}\label{IF}
   \frac{P_\lambda^k(x) P_\lambda^k(y)}{P_\lambda^k(1)}=  \frac{ \Gamma(k+1/2)}{\Gamma(k)\sqrt{\pi}}\int_{-1}^1P_\lambda^k( X_1(u), X_2(u)) (1-u^2)^{k-1}du.
 \end{equation}
  \end{thm}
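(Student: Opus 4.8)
The plan is to show that the right-hand side of \eqref{IF}, viewed as a function of $x=(x_1,x_2)$ for fixed $y$, satisfies the same second-order differential equation that characterizes $P_\lambda^k$, has the correct symmetry and polynomial structure, and has the correct normalization at $x=(1,1)$. Concretely, set
$$
F(x) = \frac{\Gamma(k+1/2)}{\Gamma(k)\sqrt\pi}\int_{-1}^1 P_\lambda^k\big(X_1(u),X_2(u)\big)(1-u^2)^{k-1}\,du,
$$
and let $D_x = x_1^2\partial_{x_1}^2 + x_2^2\partial_{x_2}^2 + 2k\big(\tfrac{x_1^2}{x_1-x_2}\partial_{x_1} + \tfrac{x_2^2}{x_2-x_1}\partial_{x_2}\big)$ be the Jack operator in the $x$-variables. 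Since $P_\lambda^k(X_1,X_2)$ is a symmetric polynomial in $X_1,X_2$ of degree $|\lambda|$, and from \eqref{v2} the elementary symmetric functions are $X_1+X_2 = \alpha$ and $X_1X_2 = x_1x_2y_1y_2$, the integrand is a polynomial in $x_1,x_2$ (and in $u$); hence $F$ is a symmetric polynomial in $x$, homogeneous of degree $|\lambda|$, and the leading monomial term is $m_\lambda$ up to the normalizing constant — one checks that at $u$-integration the top-degree part reproduces $P_\lambda^k(x)/P_\lambda^k(1)\cdot P_\lambda^k(y)$'s leading behavior, but the cleaner route is simply: if $D_x F = e_\lambda F$ and $F$ is a symmetric polynomial of degree $|\lambda|$, then $F$ is a scalar multiple of $P_\lambda^k(x)$ by uniqueness.

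The heart of the argument is therefore the identity $D_x F = e_\lambda F$. To get this I would commute $D_x$ through the integral and transfer it onto the $u$-variable. The key computational lemma is an intertwining relation: for any symmetric polynomial $g$ in two variables,
$$
D_x\Big[ g\big(X_1(u),X_2(u)\big)\Big] = \big(D_y^{(u)} g\big)\big(X_1(u),X_2(u)\big) + \big(\mathcal{L}_u\, g(X_1,X_2)\big),
$$
where $D_y$ is the Jack operator in the $y$-variables evaluated after substitution and $\mathcal{L}_u$ is a second-order differential operator in $u$ of the hypergeometric type whose formal adjoint with respect to the weight $(1-u^2)^{k-1}\,du$ on $[-1,1]$ annihilates constants (i.e. $\mathcal L_u$ is, up to a first-order correction, the Jacobi operator for which $(1-u^2)^{k-1}$ is the invariant density, so that $\int_{-1}^1 (\mathcal L_u h)(1-u^2)^{k-1}du = 0$ with no boundary terms when $k>0$). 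Because $P_\lambda^k$ is an eigenfunction, $D_y^{(u)}P_\lambda^k = e_\lambda P_\lambda^k$, and the $\mathcal L_u$ term integrates to zero against the Jacobi weight; hence $D_x F = e_\lambda F$.

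Establishing that intertwining relation is where the real work lies, and it is the step I expect to be the main obstacle. One computes $\partial_{x_i}X_j(u)$ and $\partial_{x_i}^2 X_j(u)$ from \eqref{v1}–\eqref{v2} — these involve the square root $\sqrt{\alpha^2 - 4x_1x_2y_1y_2}$ and its $u$-dependence through $\alpha$ — and similarly $\partial_u X_j(u)$, then substitutes into $D_x[g(X_1,X_2)]$ and reorganizes. The miracle to be verified is that all the $x$-derivative terms assemble into $(D_y g)(X_1,X_2)$ plus a term that depends only on differentiation in $u$; the symmetry between the roles of $x$ and $y$ in $\alpha$ (it is symmetric under simultaneously swapping and scaling) is what makes this plausible, and it is precisely the mechanism used in the $n=2$ case of the radial Dunkl/Bessel theory of type $B_2$ referenced as \cite{D}. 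I would carry out this verification by a direct but careful computation, possibly simplifying by first treating the case $x_1=x_2$ or by rational reparametrization $v = \sqrt{\alpha^2-4x_1x_2y_1y_2}$ to tame the radical. Once the intertwining identity and the boundary-term vanishing (valid for $k>0$) are in hand, the normalization constant $\Gamma(k+1/2)/(\Gamma(k)\sqrt\pi)$ is pinned down by specializing to $\lambda = (0,\dots,0)$ (where $P_\lambda^k\equiv 1$) and using $\int_{-1}^1(1-u^2)^{k-1}du = \sqrt\pi\,\Gamma(k)/\Gamma(k+1/2)$, and comparison with the zonal formula \eqref{zonal} at $k=1/2$ provides a consistency check via the substitution $\cos^2\theta=(u+1)/2$, under which $(2\pi)^{-1}d\theta$ matches $\tfrac{\Gamma(k+1/2)}{\Gamma(k)\sqrt\pi}(1-u^2)^{k-1}du$.
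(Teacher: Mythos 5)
Your plan is essentially the paper's own proof: fix $y$, call the right-hand side $H_k(x)$, show $\Lambda_k H_k=e_\lambda H_k$ by pushing the operator (\ref{LB}) through the substitution $(X_1(u),X_2(u))$ so that it becomes the same operator in the $X$-variables plus a residual second-order operator in $u$, and kill the residual by integration by parts against $(1-u^2)^{k-1}$ (indeed the residual is $\tfrac{2x_1x_2}{(x_1-x_2)^2}\bigl(2k\,u\partial_u-(1-u^2)\partial_u^2\bigr)$, whose integral against that weight vanishes for $k>0$, exactly as you predict). However, that intertwining identity, which you explicitly defer as ``where the real work lies,'' \emph{is} the proof: the explicit derivatives (\ref{X1})--(\ref{e10}) and their assembly are the entire content of the paper's argument, so as written your text is a correct plan rather than a completed proof.

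There is also one genuine logical gap in the identification step. You claim that a symmetric homogeneous polynomial of degree $|\lambda|$ satisfying $\Lambda_k F=e_\lambda F$ must be proportional to $P_\lambda^k$ ``by uniqueness,'' but uniqueness of the Jack polynomial is relative to the triangular expansion $m_\lambda+\sum_{\mu<\lambda}a_{\mu\lambda}m_\mu$, not to the eigenvalue alone; in general $e_\lambda$ need not be simple on the whole degree-$|\lambda|$ component. The paper closes this by factoring out $(X_1X_2)^{\lambda_2}=(x_1x_2y_1y_2)^{\lambda_2}$ and checking that $H_k$ expands only over $m_\mu$ with $\mu\le\lambda$ in dominance order, after which triangularity plus the eigen-equation force $H_k=A(y)P_\lambda^k(x)$. (For $n=2$ and $k>0$ one could instead verify that $\mu\mapsto e_\mu$ is injective on partitions of $|\lambda|$, since dominance is a total order in two variables, but some such argument is required.) Finally, the constant $A(y)$ is pinned down by setting $x=(1,1)$, where $X_1(u)=y_1$ and $X_2(u)=y_2$ for all $u$, giving $H_k(1,1)=P_\lambda^k(y)$; your suggestion of specializing to $\lambda=(0,\dots,0)$ only checks the trivial partition and does not determine the proportionality factor for general $\lambda$.
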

\begin{proof}
Fixing the variable  $y$ and  consider the function
  $$H_k(x)=  \frac{ \Gamma(k+1/2)}{\Gamma(k)\sqrt{\pi}} \int_{-1}^1P_\lambda^k( X_1(u), X_2(u)) (1-u^2)^{k-1}du.$$
Our proof consists in  showing in a first part that $H_k$ is eigenfunction of   the Laplace-Beltrami-type operator,
 \begin{equation}\label{LB}
\Lambda_k = x_1^2\frac{\partial^2}{\partial x_1^2}+x_2^2\frac{\partial^2}{\partial x_2^2}+2k\Big\{ \frac{x_1^2}{x_1-x_2}  \frac{\partial}{\partial x_1}+
 \frac{x_2^2}{x_2-x_1}  \frac{\partial}{\partial x_2}\Big\}.
\end{equation}
More precisely we show that
\begin{equation}\label{eig}
  \Lambda_k (H_k)= e_\lambda  H_k.
\end{equation}
where $e_\lambda$ is given by (\ref{e}).
A direct computation yields
 \begin{eqnarray}
&&\nonumber x_1^2\frac{\partial^2}{\partial x_1^2}P_\lambda^k( X_1, X_2)+ x_2^2\frac{\partial^2}{\partial x_2^2}P_\lambda^k( X_1, X_2)
 \\&&\nonumber\qquad \qquad \qquad \qquad=\left( x_1^2\frac{\partial^2 X_1 }{\partial x_1^2}+x_2^2\frac{\partial^2 X_1 }{\partial x_2^2} \right)
 \left(\frac{\partial}{\partial X_1}P_\lambda^k( X_1, X_2)-\frac{\partial}{\partial X_2}P_\lambda^k( X_1, X_2)\right)
 \\ && \nonumber\qquad \qquad \qquad \qquad+\left(\left( x_1\frac{\partial X_1 }{\partial x_1}\right)^2+\left(x_2\frac{\partial X_1 }{\partial x_2} \right)^2 \right) \frac{\partial^2}{\partial X_1^2}P_\lambda^k( X_1, X_2)\\ &&\nonumber\qquad \qquad \qquad \qquad+\left(\left( x_1\frac{\partial X_2 }{\partial x_1}\right)^2+\left(x_2\frac{\partial X_2 }{\partial x_2} \right)^2 \right) \frac{\partial^2}{\partial X_2^2}P_\lambda^k( X_1, X_2)
 \\ && \qquad \qquad \qquad \qquad+\left( 2x_1^2\frac{\partial X_1 }{\partial x_1}\frac{\partial X_2 }{\partial x_1}+ 2x_2^2\frac{\partial X_1 }{\partial x_2}\frac{\partial X_2 }{\partial x_2}\right)\frac{\partial^2}{\partial X_1\partial X_2}P_\lambda^k( X_1, X_2). \label{e_1}
\end{eqnarray}
 With the notations of (\ref{v1}) and   (\ref{v2}) we have
 \begin{equation}\label{X1}
   \frac{\partial X_1 }{\partial x_1}=  \frac{aX_1-x_2y_1y_2}{X_1-X_2},
\quad  \frac{\partial X_1 }{\partial x_2}=  \frac{\overline{a}X_1-x_1y_1y_2}{X_1-X_2}
 \end{equation}
 and
 \begin{equation}\label{X2}
     \frac{\partial X_2 }{\partial x_1}=  \frac{aX_2-x_2y_1y_2}{X_2-X_1}, \quad \frac{\partial X_2 }{\partial x_2}=  \frac{\overline{a}X_2-x_2y_1y_2}{X_2-X_1}.
      \end{equation}
It follows that
$$\frac{x_1^2}{x_1-x_2}\frac{\partial X_1 }{\partial x_1}+\frac{x_2^2}{x_2-x_1}\frac{\partial X_1 }{\partial x_2}
 = \frac{X_1^2}{X_1-X_2}+\frac{ux_1x_2(y_1-y_2) X_1}{(x_1-x_2)(X_1-X_2) }$$
and
$$\frac{x_1^2}{x_1-x_2}\frac{\partial X_2 }{\partial x_1}+\frac{x_2^2}{x_2-x_1}\frac{\partial X_2 }{\partial x_2}
 = \frac{X_2^2}{X_2-X_1}-\frac{ux_1x_2(y_1-y_2) X_2 }{(x_1-x_2) (X_1-X_2)}$$
Apply  the second  part of the operator (\ref{LB}) to $P_\lambda^k(X_1,X_2)$ yields
 \begin{eqnarray}\label{e1}
 &&\frac{x_1^2}{x_1-x_2} \frac{\partial}{\partial x_1}P_\lambda^k (X_1, X_2)+ \frac{x_2^2}{x_2-x_1} \frac{\partial}{\partial x_2}P_\lambda^k (X_1, X_2)\\&&=\frac{X_1^2}{X_1-X_2} \frac{\partial}{\partial X_1}P_\lambda^k (X_1, X_2)+ \frac{X_2^2}{X_2-X_1} \frac{\partial}{\partial X_2}P_\lambda^k (X_1, X_2)
+ \frac{2x_1x_2  }{(x_1-x_2)^2}u\frac{\partial}{\partial u}P_\lambda^k (X_1, X_2)\nonumber,
 \end{eqnarray}
since one   can   easily see
$$\frac{\partial}{\partial u}P_\lambda^k (X_1, X_2)=     \frac{(x_1-x_2) (y_1-y_2)  }{  2(X_1-X_2)}\left(  X_1\frac{\partial}{\partial X_1}P_\lambda^k (X_1, X_2)
- X_2\frac{\partial}{\partial X_1}P_\lambda^k (X_1, X_2) \right)$$
In the same way using (\ref{X1}) and (\ref{X2}) we obtain
 \begin{eqnarray}
        \left( x_1\frac{\partial X_1 }{\partial x_1}\right)^2+\left(x_2\frac{\partial X_1 }{\partial x_2} \right)^2  & = &  X_1^2 -\frac{x_1x_2 (y_1-y_2)^2X_1^2}{2(X_1-X_2)^2}(1-u^2)\label{e3}
            \\   \left( x_1\frac{\partial X_2 }{\partial x_1}\right)^2+\left(x_2\frac{\partial X_2 }{\partial x_2} \right)^2 &=&
    X_2^2 -\frac{x_1x_2 (y_1-y_2)^2X_2^2}{2(X_1-X_2)^2}(1-u^2) \label{e4}
    \\   2x_1^2\frac{\partial X_1 }{\partial x_1}\frac{\partial X_2 }{\partial x_1}+ 2x_2^2\frac{\partial X_1 }{\partial x_2}\frac{\partial X_2 }{\partial x_2} &=& \frac{x_1^2x_2^2y_1y_2(y_1-y_2)^2}{(X_1-X_2)^2}(1-u^2)\label{e5}
    \\ x_1^2\frac{\partial^2 X_1 }{\partial x_1^2}+x_2^2\frac{\partial^2 X_1 }{\partial x_2^2}& =&-\frac{ x_1x_2y_1y_2(y_1-y_2)^2}{ (X_1-X_2)^3}(1-u^2)\label{e6}
\end{eqnarray}
 The above computation of   derivatives
involve again the second derivative of $P_\lambda^k(X_1, X_2)$  with respect to $u$,
 \begin{eqnarray*}
   \frac{\partial^2}{\partial u^2}P_\lambda^k(X_1,X_2)= \frac{\partial^2 X_1}{\partial u^2} \left(\frac{\partial}{\partial X_1}P_\lambda^k(X_1, X_2)-
\frac{\partial}{\partial X_2}P_\lambda^k (X_1,X_2)  \right)\qquad\qquad\qquad\qquad\qquad\\ +
 \left(\frac{\partial X_1}{\partial u}\right)^2\frac{\partial^2}{\partial X_1^2}P_\lambda^k(X_1, X_2)+
\left(\frac{\partial X_2}{\partial u}\right)^2\frac{\partial^2}{\partial X_2^2}P_\lambda^k(X_1, X_2)+2\frac{\partial X_1}{\partial u}\frac{\partial X_2}{\partial u}\frac{\partial^2}{\partial X_1\partial X_2 }P_\lambda^k(X_1, X_2)
\end{eqnarray*}
where from  (\ref{v2}) it follows that
 \begin{eqnarray}
  \frac{\partial^2 X_1}{\partial u^2}&=& -\frac{x_1x_2y_1y_2(x_1-x_2)^2(y_1-y_2)^2}{(X_1-X_2)^3}\label{e7}
  \\ \left( \frac{\partial X_1}{\partial u}\right)^2&=& \frac{ (x_1-x_2)^2(y_1-y_2)^2 X_1^2}{4(X_1-X_2)^2}\label{e8}
  \\ \left( \frac{\partial X_2}{\partial u}\right)^2&=& \frac{ (x_1-x_2)^2(y_1-y_2)^2 X_2^2}{4(X_1-X_2)^2}\label{e9}
  \\ \frac{\partial X_1}{\partial u}\frac{\partial X_2}{\partial u}
  &=& -\frac{x_1x_2y_1y_2 (x_1-x_2)^2(y_1-y_2)^2  }{4(X_1-X_2)^2}.\label{e10}
 \end{eqnarray}
 Now  by  integration by part
\begin{eqnarray*}
  &&2k\int_{-1}^1 \left\{\frac{x_1^2}{x_1-x_2} \frac{\partial}{\partial x_1}P_\lambda^k (X_1, X_2)+ \frac{x_2^2}{x_2-x_1} \frac{\partial}{\partial x_2}P_\lambda^k (X_1, X_2)\right\}(1-u^2)\;du
\\&& \qquad\qquad =\int_{-1}^1 \left\{\frac{X_1^2}{X_1-X_2} \frac{\partial}{\partial X_1}P_\lambda^k (X_1, X_2)
+ \frac{X_2^2}{X_2-X_1} \frac{\partial}{\partial X_2}P_\lambda^k (X_1, X_2)\right\}(1-u^2)^{k-1}\;du
\\ &&\qquad\qquad+ \frac{2x_1x_2  }{(x_1-x_2)^2}\int_{-1}^1 \frac{\partial^2}{\partial u^2}P_\lambda^k (X_1, X_2)(1-u^2)^{k}\;du.
\end{eqnarray*}
Therefore combine (\ref{e_1}), (\ref{e1}) and  (\ref{e3}) ----- (\ref{e10})  to obtain  (\ref{eig}).
\par Now noting that
     $H_k$ is a homogeneous symmetric polynomial (making use the  change of variable $u$ with   $-u$).  However, for $\lambda=(\lambda_1,\lambda_1)$    we   write
    $$m_\lambda(X_1,X_2)=(X_1X_2)^{\lambda_2}( X_1^{\lambda_1-\lambda_2}+X_2^{\lambda_1-\lambda_2})=(x_1x_2y_1y_2)^{\lambda_2}( X_1^{\lambda_1-\lambda_2}+X_2^{\lambda_1-\lambda_2})$$
   and we set
  $$ G_k(x_1,x_2)=\int_{-1}^{1}( X_1^{\lambda_1-\lambda_2}(u)+X_2^{\lambda_1-\lambda_2}(u))(1-u^2)^{k-1}\;du.$$
 Clearly $G_k$ ia a    homogeneous symmetric polynomial of degree $\lambda_1-\lambda_2$,  and  it can be written as  finite linear combination of    $m_\mu(x_1,x_2)$  with  $\mu=(\mu_1,\mu_2)$,
 $\mu_1+\mu_2=\lambda_1-\lambda_2$. This implies that $H_k$ is  a finite linear combination of
 $$ (x_1x_2)^{\lambda_2}m_\mu(x_1,x_2)=m_{\mu_1+\lambda_2, \mu_2+\lambda_2}(x_1,x_2)$$
 As $|\lambda|=\mu_1+\lambda_2+\mu_2+\lambda_2$ and $\lambda_2\leq \lambda_2+\mu_2$  then we  have  $ \mu_1+\lambda_2\leq \lambda_1$,
which imply that   $(\mu_1+\lambda_2, \mu_2+\lambda_2)\leq (\lambda_1,\lambda_2)$ in dominance order.
Thus   we   conclude that $H_k$ takes the form 
$$H_k= \sum_{\mu\leq \lambda}h_{\mu,\lambda}m_\mu$$
 and it can be written as
$$H_k(x_1,x_2)= A(y_1,y_2)P_\lambda^k(x_1,x_2)$$
But since $H_k(1,1)=P_\lambda^k(y_1,y_2)$  then one has
$$ A(y_1,y_2)=\frac{P_\lambda^k(y_1,y_2)}{P_\lambda^k(1,1)}.$$
This completes  the proof of   Theorem \ref{th1}.
\end{proof}
\section{Applications}
In a first application we show that  the integral product formula  (\ref{IF}) can be used  to  find  a known     integral representation  for generalized Bessel function associated with a root system of type  $B_2$, given in  \cite{C, D}.
\par   Define the  hypergeometric function  of two arguments,
 $$_0F_1(\mu,x,y)=\sum_{\lambda}\frac{1}{|\lambda|![\mu]_\lambda^k}\: \frac{C_\lambda^k (x)C_\lambda^k (y)}{C_\lambda^k (1)}$$
where $C_\lambda^k$ are   Jack polynomials associated  with parameter $k$ and normalized such that
$$(x_1+x_2+...+x_n)^n= \sum_{|\lambda|=n}C_\lambda^k(x), \qquad x=(x_1,x_2,...,x_n)$$
and
$$[\mu]_\lambda^k= \prod_{j=1}^n\left(\mu- k(j-1)\right)_{\lambda_j}.$$
 The relationship between $P_\lambda^k$ and $C_\lambda$ is given by
$$C_\lambda^k= \frac{C_\lambda(1)}{P_\lambda^k(1)}\; P_\lambda^k= \frac{  |\lambda|!}{h_k(\lambda)}P_\lambda^k$$
with
$$ h_k(\lambda)=\prod_{1\leq i\leq \ell(\lambda);\; 1\leq j\leq \lambda_i} ( \lambda_i-j+1+k( \lambda_j'-i))$$
Then
\begin{equation}\label{0f1}
 _0F_1(\mu,x,y)=\sum_{\lambda}  \frac{ 1} {[\mu]_\lambda^\alpha  h_k(\lambda)} \frac{P_\lambda^k(x)P_\lambda^k(y)}{P_\lambda^k(1)}.
\end{equation}
For $x=(x_1,x_2,...,x_n)\in \mathbb{R}^n$ we put $x^2=(x_1^2,x_2^2,...,x_n^2)$.
 In  [\cite{R2}, Prop. 4.5],   R\"{o}sler   has shown  that   generalized Bessel function associated with a root system of type  $B_n$,
 $$R=\{\pm e_i,\; \pm e_i\pm e_j;\; 1\leq i,j\leq n \}$$
 and a multiplicity $ \kappa=(\kappa_1,\kappa_2)$  is connected to hypergeometric function $_0F_1$ by
 \begin{equation}\label{JF}
   J^\kappa_{B_n}( x,y)=_0F_1(\mu,x^2/,y^2/2),
 \end{equation}
with  $k=\kappa_2$ and
$$  \mu= \kappa_1+\kappa_2(n-1)+\frac{1}{2}.$$
Here  $\kappa_1$ and $\kappa_2$ are
the values of  $\kappa$ on the roots $\pm e_i$  and $\pm e_i\pm e_j$, respectively.
\begin{lem} \label{lem4}let $x=(x_1,x_2)$ and $e_1=(1,0)$. Then
 $$ _0F_1(\mu,x,e_1)= \frac{\Gamma(k+1/2)}{\Gamma(k)\sqrt{\pi}}\int_{-1}^1  \mathcal{I}_{\mu-1}\left(\sqrt{2(x_1+x_2+v(x_1-x_2))}\right) (1-v^2)^{k-1} \;dv,$$
where $ \mathcal{I}_{\mu-1} $
 is the normalized modified Bessel function of the first kind and of order  $\mu-1$
$$ \mathcal{I}_{\mu-1}(t) =  \Gamma(\mu ) \sum_{m=0}^\infty
\frac{(t/2)^{2m} }{m!\Gamma(m+\mu)},\qquad t\in \mathbb{R}.$$
\end{lem}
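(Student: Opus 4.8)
The plan is to apply Theorem~\ref{th1} with the specialization $y=e_1=(1,0)$, i.e. $y_1=1$, $y_2=0$, and to carry the identity through the transition from Jack polynomials $P_\lambda^k$ to the normalized polynomials $C_\lambda^k$ and then to the series defining ${}_0F_1$. First I would observe that when $y_1=1,y_2=0$ one has $x_1x_2y_1y_2=0$, so from (\ref{v2}) the two eigenvalues collapse to $X_1(u)=\alpha$ and $X_2(u)=0$, where by (\ref{v1}) $\alpha=\tfrac12\big((x_1+x_2)+(x_1-x_2)u\big)$. Hence $P_\lambda^k(X_1(u),X_2(u))=P_\lambda^k(\alpha,0)$, a single-variable specialization.

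Next I would use (\ref{0f1}), which writes ${}_0F_1(\mu,x,e_1)$ as $\sum_\lambda \big([\mu]^k_\lambda h_k(\lambda)\big)^{-1} P_\lambda^k(x)P_\lambda^k(e_1)/P_\lambda^k(1)$, and insert the product formula (\ref{IF}) for each term. This turns the sum into
$$
{}_0F_1(\mu,x,e_1)=\frac{\Gamma(k+1/2)}{\Gamma(k)\sqrt\pi}\int_{-1}^1\Big(\sum_\lambda\frac{P_\lambda^k(\alpha,0)}{[\mu]^k_\lambda h_k(\lambda)}\Big)(1-u^2)^{k-1}\,du,
$$
modulo justifying the interchange of sum and integral (uniform convergence on $[-1,1]$, since $\alpha$ stays bounded and the coefficients decay factorially). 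The remaining task is purely an identification of the inner sum: one must show
$$
\sum_\lambda\frac{P_\lambda^k(\alpha,0)}{[\mu]^k_\lambda h_k(\lambda)}=\mathcal{I}_{\mu-1}\big(\sqrt{2\alpha}\big),
$$
which after the substitution $2\alpha=2(x_1+x_2)+ \ldots$ — more precisely $\sqrt{2\alpha}=\sqrt{(x_1+x_2)+(x_1-x_2)u}$, matching the stated argument up to the factor $2$ inside — gives the claimed formula with $v=u$.

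To prove that last identity I would note that $P_\lambda^k(\alpha,0)$ vanishes unless $\lambda=(\lambda_1,0)$ has a single part (since $P_\lambda^k$ is a homogeneous symmetric polynomial and the monomials $m_\mu$ with $\mu_2>0$ all carry a factor $x_2$), and for $\lambda=(m,0)$ one has $P_{(m)}^k(z,0)=z^m$ while $[\mu]^k_{(m)}=(\mu)_m$ and $h_k((m))=m!$. Hence the inner sum reduces to $\sum_{m\ge0}\alpha^m/\big((\mu)_m m!\big)$, and comparing with the series $\mathcal{I}_{\mu-1}(t)=\Gamma(\mu)\sum_m (t/2)^{2m}/(m!\,\Gamma(m+\mu))=\sum_m (t^2/4)^m/((\mu)_m m!)$ one reads off that it equals $\mathcal{I}_{\mu-1}(t)$ with $t^2/4=\alpha$, i.e. $t=2\sqrt\alpha=\sqrt{4\alpha}=\sqrt{2\big(2(x_1+x_2)+2(x_1-x_2)u\big)}$; tracking the normalization conventions for $\alpha$ in (\ref{v1}) against the factor $1/2$ appearing in (\ref{JF}) should reconcile this with the $\sqrt{2(x_1+x_2+v(x_1-x_2))}$ written in the statement.

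The main obstacle I anticipate is not any single hard step but the careful bookkeeping of normalizations: the relation between $P_\lambda^k$ and $C_\lambda^k$ via $h_k(\lambda)$ and $|\lambda|!$, the Pochhammer symbol $[\mu]^k_\lambda$ at a one-row partition, and especially the factors of $2$ hidden in the definitions of $\alpha$, $a$, $\overline a$ in (\ref{v1}) versus the $x^2/2$ scaling in (\ref{JF})–(\ref{JF}). I would therefore spend most of the write-up pinning down $P_{(m)}^k(z,0)=z^m$, $[\mu]^k_{(m)}=(\mu)_m$, $h_k((m))=m!$, and then simply matching power series coefficient by coefficient, so that the Bessel identification is forced and only the argument of $\mathcal I_{\mu-1}$ needs to be simplified to the stated form.
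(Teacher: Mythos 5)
Your proposal is correct in substance and reaches the stated formula, but it takes a slightly different route from the paper. The paper does \emph{not} invoke Theorem \ref{th1} here: it specializes the Okounkov--Olshanski integral recursion (\ref{rec}) to $n=2$ to get the explicit representation (\ref{n=2}) of $P^k_{(m,0)}(x_1,x_2)$, observes that at $y=e_1$ the factor $(x_1x_2y_1y_2)^{\ell}$ kills every partition with $\lambda_2=\ell>0$ in the double sum for ${}_0F_1$, and then sums the remaining series $\sum_m \frac{1}{(\mu)_m m!}P_{(m,0)}(x)/P_{(m,0)}(1)$ under the integral sign. You instead specialize the product formula (\ref{IF}) at $y=e_1$, note $X_1(u)=\alpha$, $X_2(u)=0$, and use $P^k_{(m)}(z,0)=z^m$, $[\mu]^k_{(m)}=(\mu)_m$, $h_k((m))=m!$ to reduce to the same series $\sum_m \alpha^m/((\mu)_m m!)$. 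These are equivalent: applying (\ref{IF}) with $y=e_1$ reproduces exactly (\ref{n=2}) with $\ell=0$ (the constants agree via the duplication formula $\Gamma(2k)=2^{2k-1}\pi^{-1/2}\Gamma(k)\Gamma(k+1/2)$), so your argument is a legitimate alternative that has the aesthetic advantage of deriving the lemma from the paper's own Theorem \ref{th1} rather than from the external recursion, at the cost of needing Theorem \ref{th1} as a prerequisite. One concrete slip to fix in your write-up: with $\alpha=\tfrac12\big((x_1+x_2)+(x_1-x_2)u\big)$ the correct identification is $t=2\sqrt{\alpha}$, hence $t^2=4\alpha=2\big(x_1+x_2+u(x_1-x_2)\big)$, which already matches the stated argument $\sqrt{2(x_1+x_2+v(x_1-x_2))}$ exactly; your intermediate expressions $\sqrt{2\alpha}$ and $\sqrt{2(2(x_1+x_2)+2(x_1-x_2)u)}$ are both miscomputations of this, and no further ``reconciliation of normalizations'' is needed.
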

\begin{proof}
   In the two dimensional case,  the formula (\ref{rec})   can be written as
   \begin{equation}\label{n=2}
      P_\lambda^k(x_1,x_2) = \frac{\Gamma (m+2k)}{ 2^{2k-1}\Gamma( k)\Gamma(m+k)}\; (x_1x_2)^{\ell}
  \int_{-1}^{1} \left(\frac{x_1+x_2}{2}+u\frac{x_1-x_2}{2}\right)^{m}(1-u^2)^{k-1}  \;du.
   \end{equation}
 where  $ \ell=\lambda_2 $ and  $m=\lambda_1-\lambda_2$.
We  have
$$h_k(\lambda)= ( m+1+k)_{\ell}\;m!\;\ell!\;,\quad [\mu]_{\lambda}=(\mu+\ell)_{m}\;(\mu)_{\ell}\;( \mu-k)_{\ell} $$
and according to ( 6.4) of \cite{OO}
$$P_{ (m,0) }(1)=(m+k)_{k}\frac{\Gamma(k)}{\Gamma(2k)}=\frac{\Gamma (m+2k)\Gamma(k)}{\Gamma(m+k)\Gamma(2k)}.$$
 Hence one can write
\begin{eqnarray*}
   _0F_1(\mu, x,y)=\sum_{\ell=0}^\infty \frac{  (x_1 x_2 y_1 y_2)^{\ell} }{ (  \mu-k )_{ \ell }\; (\mu)_{\ell}\;\ell! }
   \sum_{ m=0}^\infty \frac{1}{   ( \mu +\ell)_{ m }\;
   ( k_2 +1+m )_{\ell}\; m!}  \frac{P_{(m,0)}(x)P_{ (m,0) }(y)}{P_{ (m,0) }(1)}
\end{eqnarray*}
 and  we obtain  that \\  \\
$\displaystyle{_0F_1(\mu,x,e_1)}$
\begin{eqnarray*}
  &=& \sum_{ m=0}^\infty \frac{1}{   ( \mu)_{ m }
     m!}  \frac{P_{(m,0)}(x)}{P_{ (m,0) }(1)}
     \\&= &
    \frac{\Gamma(\mu)}{{2^{2k-1}}\beta(k,k) } \int_{-1}^{1}\sum_{ m=0}^\infty \frac{1}{   \Gamma(\mu+m)
     m!}  \left(\frac{x_1+x_2}{2}+u\frac{x_1-x_2}{2}\right)^{m}(1-u^2)^{k-1}  \;du
     \\&=&  \frac{\Gamma(k+1/2)}{\Gamma(k)\sqrt{\pi}}\int_{-1}^{1} \mathcal{I}_{\mu-1}(\sqrt{2(x_1+x_2+u(x_1-x_2)})(1-u^2)^{k-1}  \;du.
\end{eqnarray*}
which is the desired result.
   \end{proof}
\begin{lem}\label{lem3}
  In the two dimensional case,  if we  let $r$ be the rotation
  $$r= \frac{1}{\sqrt{2}} \left(
                            \begin{array}{cc}
                              1 & 1 \\
                              -1& 1 \\
                            \end{array}
                          \right)$$

   then we have $$J^{ \kappa}_{B_2}( x,y)=J^{ \kappa'}_{B_2}(r.x,r.y),$$
   where $\kappa=(\kappa_1,\kappa_2) $ and  $\kappa'=(\kappa_2,\kappa_1)$.
   \end{lem}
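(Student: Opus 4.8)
\textbf{Proof plan for Lemma \ref{lem3}.}

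The plan is to exploit the description of $J^\kappa_{B_2}$ as a Dunkl-type generalized Bessel function attached to the root system $B_2$ together with the well-known equivariance of such Bessel functions under the Weyl group, and, crucially, under the extra symmetry of $B_2$ coming from the fact that $B_2$ is self-dual up to a rotation by $\pi/4$. Concretely, the root system $B_2$ consists of the short roots $\pm e_1,\pm e_2$ and the long roots $\pm e_1\pm e_2$; the rotation $r$ by $45^\circ$ given in the statement sends the short roots to (a rescaling of) the long roots and vice versa, so $r\cdot B_2$ is again $B_2$ but with the multiplicity labels $\kappa_1$ and $\kappa_2$ interchanged. The generalized Bessel function $J^\kappa_R(x,y)$ depends only on the orbit data $(R,\kappa)$ and is invariant under any orthogonal transformation $g$ in the sense $J^\kappa_R(x,y)=J^{g\kappa}_{gR}(gx,gy)$, simply because the underlying Dunkl operators transform covariantly: $g\circ T_\xi^{R,\kappa}\circ g^{-1}=T_{g\xi}^{gR,g\kappa}$. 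Applying this with $g=r$ and using $rB_2=B_2$, $r\cdot(\kappa_1,\kappa_2)=(\kappa_2,\kappa_1)=\kappa'$ gives exactly $J^\kappa_{B_2}(x,y)=J^{\kappa'}_{B_2}(r x,r y)$.

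The first step I would carry out is to recall precisely the defining property of $J^\kappa_{B_2}$ that is used in \eqref{JF}: it is the $W$-invariant analytic solution, normalized by $J(0,y)=1$, of the joint eigenvalue problem $T_{\xi,x}^{\kappa}J^\kappa(x,y)=\langle\xi,y\rangle J^\kappa(x,y)$ for all $\xi$, where $T_\xi^\kappa$ are the Dunkl operators for $B_2$ with multiplicity $\kappa$. The second step is the covariance computation: for an orthogonal map $g$ one checks on generators that $g$ conjugates the reflection $\sigma_\alpha$ to $\sigma_{g\alpha}$ and hence the difference part of the Dunkl operator transforms as claimed; since the root system $B_2$ is mapped to itself by $r$ with short and long roots swapped, the multiplicity function is pushed forward to $\kappa'$. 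The third step is to transport the normalization and $W$-invariance through $g=r$ (both are preserved because $r$ normalizes the Weyl group of $B_2$), and then invoke uniqueness of the Bessel function to conclude $J^\kappa_{B_2}(x,y)=J^{\kappa'}_{B_2}(rx,ry)$.

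The main obstacle, and the point that deserves care rather than being a routine calculation, is the bookkeeping of which multiplicity lands on which root class after applying $r$: one must verify that $r$ genuinely sends the $W(B_2)$-orbit of short roots onto the orbit of long roots (up to the irrelevant overall scaling $\sqrt2$, which does not affect the reflections or the Dunkl operators), so that the label $\kappa_1$ attached to $\pm e_i$ becomes the label attached to $\pm e_i\pm e_j$ and conversely — this is exactly the swap $(\kappa_1,\kappa_2)\mapsto(\kappa_2,\kappa_1)$. A secondary, purely formal point is that the scaling discrepancy between $rB_2$ and $B_2$ must be absorbed: since $T_\xi$ depends on the roots only through the hyperplanes they define and the assigned multiplicities, rescaling all roots by a common positive constant leaves the operators unchanged, so $r\cdot B_2$ and $B_2$ define the same Dunkl theory. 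Once these two normalizations are pinned down, the identity follows immediately from the uniqueness characterization of $J^\kappa_{B_2}$, with no further computation needed.
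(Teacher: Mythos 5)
Your proposal is correct and follows essentially the same route as the paper: both rest on the covariance of the $B_2$ Dunkl operators under the $45^\circ$ rotation $r$ (which swaps the short and long root orbits, hence the multiplicities) combined with the uniqueness characterization of the kernel. The only difference is one of explicitness — the paper verifies the intertwining relation $T_i^{\kappa}(f\circ r)=\bigl((r^{-1})_i\text{-combination of }T_j^{\kappa'}f\bigr)\circ r$ by direct computation on the two generators and then applies uniqueness of the Dunkl kernel $E^{\kappa}_{B_2}$ before summing over the Weyl group, whereas you cite the general equivariance principle and the self-duality of $B_2$.
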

  \begin{proof}
   In $\mathbb{R}^2$ the Dunkl operators of type $B_2$ are  given  by
    \begin{eqnarray*}
     T_1^\kappa f(x)=&& \frac{\partial f}{\partial x_1}(x) +\kappa_1\frac{f(x_1,x_2)-f(-x_1,x_2)}{x_1}
     +\kappa_2\frac{f(x_1,x_2)-f(x_2,x_1)}{x_1-x_2}
     \\&+&\kappa_2\frac{f(x_1,x_2)-f(-x_2,-x_1)}{x_1+x_2}
    \end{eqnarray*}
    and
      \begin{eqnarray*}
     T_2^\kappa f(x)=&& \frac{\partial f }{\partial x_2}(x) +\kappa_1\frac{f(x_1,x_2)-f(x_1,-x_2)}{x_2}
     -\kappa_2\frac{f(x_1,x_2)-f(x_2,x_1)}{x_1-x_2}
     \\&+&\kappa_2\frac{f(x_1,x_2)-f(-x_2,-x_1)}{x_1+x_2}.
    \end{eqnarray*}
    For $y=(y_1,y_2)$ the Dunkl kernel $E^\kappa_{B_2}(.,y)$ of type $B_2$ is the unique   solution  of the system
    \begin{eqnarray*}
    T_i^\kappa(f)(x)&=&y_i f(x),\quad i= 1,2
   \\ f(0)&=&1.
    \end{eqnarray*}
    The generalized Bessel function $J_B^{ \kappa}$ is  defined to be
    \begin{equation}\label{bess}
      J_{B_2}^{ \kappa}(x,y)=\sum_{w\in W_B }E^\kappa_{B_2}(w.x,y), \qquad x,y\in \mathbb{R}^2,
    \end{equation}
     where $W_B$ is the Weyl group for the root system of type $B_2$,
     $$R=\{\pm e_1,\pm e_2,\pm e_1\pm e_2\}.$$
  With the notation $r^{-1}.f(x)=f(r.x)$,   one can see that
    \begin{eqnarray*}
     T_1^\kappa(r^{-1}.f)(x)&=& \frac{1}{\sqrt{2}}T_1^{\kappa'}(f)(r.x)- \frac{1}{\sqrt{2}}T_2^{\kappa'}(f)(r.x)
      \\T_2^\kappa(r^{-1}.f)(x)&=& \frac{1}{\sqrt{2}}T_1^{\kappa'}(f)(r.x)+ \frac{1}{\sqrt{2}}T_2^{\kappa'}(f)(r.x)
    \end{eqnarray*}
      Now if we let  $f(x)=E^{\kappa'}_{B_2}(r.x,r.y)$ then $f$  satisfies
    \begin{eqnarray*}
     T_1^{\kappa}(f)(x)&=& \frac{y_1+y_2}{2}f(x) +\frac{y_1-y_2}{2}f(x)=y_1f(x),
      \\T_2^{\kappa}(f)(x)&=& \frac{-y_1+y_2}{2}f(x)-\frac{y_1-y_2}{2}f(x)=y_2f(x)\quad \text{and}\\  \;f(0)&=&1
    \end{eqnarray*}
 From which it follows that
 $$ f(x)=E^{\kappa'}_{B_2}(r.x,r.y)=E^{\kappa}_{B_2}(x,y)$$
 and  the lemma follows from (\ref{bess}).
 \end{proof}
 \begin{thm}
 The generalized Bessel function of type $B_2$ associated with a multiplicity $\kappa=(\kappa_1,\kappa_2) $ has the following integral representation
   \begin{equation}\label{}
     J_{B_2}^{ \kappa}(x,y)=c_\kappa\int_{-1}^1\int_{-1}^1\; \mathcal{I}_\mu \left( \sqrt{\frac{Z_{x,y}(u,v)}{2}}\right )(1-u^2)^{\kappa_2-1}(1-v^2)^{\kappa_1-1}\;du\;dv
   \end{equation}
   where
   $$ Z_{x,y}(u,v)= \sqrt{  (x_1^2+x_2^2)(y_1^2+y_2^2)+u (x_1^2-x_2^2)(y_1^2-y_2^2) +4v x_1x_2y_1y_2 }$$
   and
   $$c_\kappa=\frac{\Gamma(\kappa_1+1/2)\Gamma(\kappa_2+1/2)}{\pi\Gamma(\kappa_1)\Gamma( \kappa_2) }$$
 \end{thm}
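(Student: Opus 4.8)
The plan is to obtain the representation by concatenating the three facts already established — the integral product formula of Theorem~\ref{th1}, the rotation identity of Lemma~\ref{lem3} and the one‑point evaluation of Lemma~\ref{lem4} — starting from (\ref{0f1}) and (\ref{JF}). Since $J^\kappa_{B_2}(x,y)={}_0F_1(\mu,x^2/2,y^2/2)$ is invariant under all sign changes and under the coordinate transposition in each of $x$ and $y$, and the right‑hand side has the same invariances (sign changes being absorbed by the even change of variable $v\mapsto-v$ and transpositions by $u\mapsto-u$), it suffices to prove the formula for $x_1\ge x_2\ge0$ and $y_1\ge y_2\ge0$; for such $x,y$ all the square roots occurring below are real and the eigenvalues $X_1(u),X_2(u)$ of (\ref{v2}) are nonnegative.

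First I would use (\ref{JF}) with $n=2$, $k=\kappa_2$ and $\mu=\kappa_1+\kappa_2+1/2$ to expand $J^\kappa_{B_2}(x,y)={}_0F_1(\mu,x^2/2,y^2/2)$ through (\ref{0f1}), and apply Theorem~\ref{th1}, with parameter $k=\kappa_2$, to every ratio $P_\lambda^k(x^2/2)P_\lambda^k(y^2/2)/P_\lambda^k(1)$. The eigenvalues in (\ref{v1})--(\ref{v2}) are then formed from $(x_1^2/2,x_2^2/2)$ and $(y_1^2/2,y_2^2/2)$, so $X_1(u)+X_2(u)=\alpha=\tfrac18\bigl((x_1^2+x_2^2)(y_1^2+y_2^2)+u(x_1^2-x_2^2)(y_1^2-y_2^2)\bigr)$ and $X_1(u)X_2(u)=\tfrac1{16}x_1^2x_2^2y_1^2y_2^2$. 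Because the ${}_0F_1$ series converges locally uniformly in its arguments and $(X_1(u),X_2(u))$ stays in a compact set for $u\in[-1,1]$, the sum may be interchanged with the $u$‑integration; this produces the outer integral, with weight $(1-u^2)^{\kappa_2-1}$ and constant $\Gamma(\kappa_2+1/2)/(\Gamma(\kappa_2)\sqrt\pi)$, and leaves the sum $\sum_\lambda P_\lambda^k(X_1(u),X_2(u))/\bigl([\mu]_\lambda^k h_k(\lambda)\bigr)$. Since $P_\lambda^k(1)=P_\lambda^k(1,1)$, this sum equals ${}_0F_1\bigl(\mu,(X_1(u),X_2(u)),(1,1)\bigr)$, and by (\ref{JF}) once more it equals $J^\kappa_{B_2}\bigl((\sqrt{2X_1},\sqrt{2X_2}),(\sqrt2,\sqrt2)\bigr)$.

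Next I would rotate: since $r\cdot(\sqrt2,\sqrt2)=(2,0)$ and $r\cdot(\sqrt{2X_1},\sqrt{2X_2})=(\sqrt{X_1}+\sqrt{X_2},\,\sqrt{X_2}-\sqrt{X_1})$, Lemma~\ref{lem3}, applied with $\kappa'=(\kappa_2,\kappa_1)$ — for which the index is again $\mu$ but the Jack parameter becomes $\kappa'_2=\kappa_1$ — turns the inner quantity into $J^{\kappa'}_{B_2}\bigl((\sqrt{X_1}+\sqrt{X_2},\sqrt{X_2}-\sqrt{X_1}),(2,0)\bigr)$. Returning to ${}_0F_1$ via (\ref{JF}), and using the homogeneity ${}_0F_1(\mu,P,cQ)={}_0F_1(\mu,cP,Q)$ — immediate from (\ref{0f1}) and the homogeneity of $P_\lambda^k$ — to rescale the second argument from $(2,0)$ down to $e_1=(1,0)$, this becomes ${}_0F_1\bigl(\mu,\bigl((\sqrt{X_1}+\sqrt{X_2})^2,(\sqrt{X_1}-\sqrt{X_2})^2\bigr),e_1\bigr)$ with Jack parameter $\kappa_1$; and $(\sqrt{X_1}\pm\sqrt{X_2})^2=X_1+X_2\pm2\sqrt{X_1X_2}=\alpha\pm\tfrac12x_1x_2y_1y_2$ from the values of $X_1+X_2$ and $X_1X_2$ just found.

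Finally I would invoke Lemma~\ref{lem4}, with Jack parameter $\kappa_1$ and index $\mu$, for this last hypergeometric function: it supplies the inner integral, with weight $(1-v^2)^{\kappa_1-1}$ and constant $\Gamma(\kappa_1+1/2)/(\Gamma(\kappa_1)\sqrt\pi)$, and the normalized modified Bessel function of Lemma~\ref{lem4} evaluated at $\sqrt{2\bigl[(\alpha+\tfrac12x_1x_2y_1y_2)+(\alpha-\tfrac12x_1x_2y_1y_2)+v\bigl((\alpha+\tfrac12x_1x_2y_1y_2)-(\alpha-\tfrac12x_1x_2y_1y_2)\bigr)\bigr]}=\sqrt{4\alpha+2v\,x_1x_2y_1y_2}$; inserting the value of $\alpha$ rewrites this argument as $\sqrt{Z_{x,y}(u,v)/2}$. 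The product of the two constants is exactly $c_\kappa=\Gamma(\kappa_1+1/2)\Gamma(\kappa_2+1/2)/\bigl(\pi\,\Gamma(\kappa_1)\Gamma(\kappa_2)\bigr)$, and the two weight factors are those in the statement, which finishes the argument. I expect no conceptual obstacle — the identity is a direct chain of Theorem~\ref{th1}, Lemma~\ref{lem3} and Lemma~\ref{lem4} — so the real work is the careful bookkeeping of the three changes of variables (the squaring $x\mapsto x^2/2$, the factor $\sqrt2$ produced by $r$, and the rescaling of $(2,0)$ to $e_1$) needed to reach precisely $Z_{x,y}(u,v)$ and $c_\kappa$, together with the routine justifications of the term‑by‑term use of Theorem~\ref{th1}, of the interchange of sum and integral, and of the nonnegativity of $X_1(u),X_2(u)$.
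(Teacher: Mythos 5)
Your proposal is correct and follows essentially the same route as the paper: apply Theorem~\ref{th1} inside the ${}_0F_1$ expansion of $J^\kappa_{B_2}(x,y)={}_0F_1(\mu,x^2/2,y^2/2)$, recognize the inner sum as $J^\kappa_{B_2}((\sqrt{2X_1},\sqrt{2X_2}),(\sqrt2,\sqrt2))$, rotate by $r$ via Lemma~\ref{lem3}, and finish with Lemma~\ref{lem4}; you merely make explicit some steps the paper leaves implicit (term-by-term use of the product formula, the sum--integral interchange, and the homogeneity rescaling of $(2,0)$ to $e_1$). Note that your bookkeeping actually lands on the Bessel index $\mu-1$ and on $Z_{x,y}(u,v)$ without the extra outer square root, which is what Lemma~\ref{lem4} and the computation genuinely give --- the theorem's displayed $\mathcal{I}_\mu$ and the nested radical in $Z$ appear to be typographical slips in the paper, not errors in your argument.
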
\label{th3}
 \begin{proof}
  Let us denote
  $$ A(u)= \Big( X_1(x^2/2,y^2/2 ,u),X_2(x^2/2,y^2/2, u)\Big)=(A_1(u),A_2(u)) \quad \text{and}\quad\textbf{1}=(1,1)$$
 In view of (\ref{JF}) and (\ref{IF}) one can write
 \begin{eqnarray*}
 J_{B_2}^{\kappa}(x,y)&=& \frac{ \Gamma(\kappa_2+1/2)}{\Gamma(\kappa_2)\sqrt{\pi}}
 \int_{-1}^1 \; _0F_1 (\mu, A(u), \textbf{1})  (1-u^2)^{\kappa_2-1}\; du
\\&=&  \frac{ \Gamma(\kappa_2+1/2)}{\Gamma(\kappa_2)\sqrt{\pi}}\int_{-1}^1 \; J_{B_2}^{\kappa} ((\sqrt{ 2A_1(u)},\sqrt{ 2A_2(u)}), (\sqrt{2},\sqrt{2})))  (1-u^2)^{\kappa_2-1}\; du
 \end{eqnarray*}
 Using Lemmas \ref{lem3} and  \ref{lem4} \\ \\
 $\displaystyle{J_{B_2}^{\kappa} ((\sqrt{ 2A_1(u)},\sqrt{ 2A_2(u)}), (\sqrt{2},\sqrt{2})))}$
 \begin{eqnarray*}
 &=&   J_{B_2}^{ \kappa'} ( D_1(u), D_1(u), 2,0)
 \\  &=& _0F_1 ( \mu, (D_1(u)^2, D_2(u)^2), (1,0) )
 \\&=&  \frac{\Gamma(\kappa_1+1/2)}{\Gamma(\kappa_1)\sqrt{\pi}}\int_{-1}^1
  \mathcal{I}_\mu\left(\sqrt{2(D_1(u)^2_1+D_2(u)^2+v(D_1(u)^2-D_2(u)^2))}\right) (1-v^2)^{\kappa_1-1} \;dv
    \end{eqnarray*}
 where
 $$ D_1(u)=\sqrt{ A_1(u)} +\sqrt{ A_2(u)},\quad  D_2(u)=-\sqrt{ A_1(u)} +\sqrt{ A_2(u)}$$
 and one can easily check  that
 \begin{eqnarray*}
    2(D_1(u)^2_1+D_2(u)^2+v(D_1(u)^2-D_2(u)^2))
 =  \frac{Z_{x,y}(u,v)}{2}
 \end{eqnarray*}
 This completes the proof of Theorem \ref{th3}.
\end{proof}
The second application  contains  an another proof of the famous known    product formula for the modified bessel function $ \mathcal{I}_\mu$
 (see for example chapter XI of \cite{Wa}).
 In the case $n=2$,    if we let  $x_1=-x_2= x$,  $\lambda=(2,1)$  then  we get  from (\ref{n=2})
 $$\lim_{\ell\rightarrow \infty}\frac{P_{\ell\lambda}( 1+x/ \ell,1-x/ \ell))}{P_{\ell\lambda}(1)}
 =\frac{ \Gamma(2k)}{ 2^{2k-1}\Gamma( k) }
  \int_{-1}^{1} e^{ux}(1-u^2)^{k-1}  \;du.$$
  $$ =\frac{ \Gamma(k+1/2)}{ \sqrt{\pi}\Gamma( k) }
  \int_{-1}^{1} e^{ux}(1-u^2)^{k-1}  \;du= \mathcal{I}_{k-1/2}(x).$$
  Now  using  the product formula (\ref{IF}) with   $x_1=-x_2= x$, $y_1=-y_2= y$ and $\lambda=(2,1)$ and  the following

  $$X_1(\ell,u)X_2(\ell,u)=\left(1+\frac{x}{\ell}\right)\left(1-\frac{x}{\ell}\right)
  \left(1+\frac{y}{\ell}\right)\left(1-\frac{y}{\ell}\right)= 1-\frac{x^2+y^2}{\ell^2}+ \frac{x^2y^2}{\ell^4}$$
     and
   $$\frac{X_1(\ell,u)+X_2(\ell,u)}{2}=1+\frac{uxy}{\ell^2}$$
   $$\frac{X_1(\ell,u)-X_2(\ell,u)}{2}=\sqrt{\left(1+\frac{uxy}{\ell^2}\right)^2-\left( 1-\frac{x^2+y^2}{\ell^2}+\frac{x^2y^2}{\ell^4}\right)}
    = \frac{\sqrt{ x^2+y^2+2uxy}}{\ell}+O(1/\ell^2)$$
    From which it follows that
   $$\lim_{\ell\rightarrow\infty}\left(\frac{X_1(\ell,u)+X_2(\ell,u)}{2}+v\frac{X_1(\ell,u)-X_2(\ell,u)}{2}\right)^{\ell}= e^{v\sqrt{ x^2+y^2+2uxy}} $$
   and\\ \\
   $\displaystyle{\lim_{\ell\rightarrow\infty}\frac{P_\lambda(X_1(\ell,u),X_2(\ell,u))}{P_\lambda^k(1,1)}}$
   \begin{eqnarray*}
   &=& \frac{ \Gamma(k+1/2)}{ \sqrt{\pi}\Gamma( k) } \lim_{\ell\rightarrow\infty}
        (X_1(\ell,u)X_2(\ell,u))^{\ell}
  \int_{-1}^{1} \left(\frac{X_1(\ell,u)+X_2(\ell,u)}{2}+u\frac{X_1(\ell,u)-X_2(\ell,u)}{2}\right)^{\ell}
  \\&&\qquad\qquad\qquad\qquad\qquad\qquad\qquad\qquad\qquad\qquad\qquad\qquad\qquad\qquad(1-u^2)^{k-1}  \;du
  \\& = &\mathcal{I}_{k-1/2}( \sqrt{ x^2+y^2+2uxy}.
   \end{eqnarray*}
  Hence  the following formula  follows
  $$\mathcal{I}_{k-1/2}(x)\mathcal{I}_{k-1/2}(y)=\frac{ \Gamma(k+1/2)}{ \sqrt{\pi}\Gamma( k) }\int_{-1}^{1}\mathcal{I}_{k-1/2}( \sqrt{ x^2+y^2+2uxy}\; (1-u^2)^{k-1}\; du.$$

\par Finally, we hope that  this contribution will shed light on the general case of a product formula for Jack polynomials as in the theorem 
\ref{th1}.

\end{document}